\documentclass{article}
\usepackage[T1]{fontenc}
\usepackage{a4wide}
\usepackage{graphicx} 
\usepackage{systeme}
\usepackage{amsmath}
\usepackage{amssymb}
\usepackage{cite}
\usepackage[hidelinks]{hyperref}
\usepackage{xcolor}
\usepackage{booktabs} 
\usepackage[caption=false,font=footnotesize]{subfig}
\usepackage{flushend}
\usepackage{amsthm}
\usepackage[mathscr]{euscript}
\newtheorem{theorem}{Theorem}
\newtheorem{assum}{Assumption}
\newtheorem{lemma}[theorem]{Lemma}
\newtheorem{prop}[theorem]{Proposition}

\newtheorem{remark}[theorem]{Remark}

\DeclareMathAlphabet{\mathbbb}{U}{bbold}{m}{n}

\renewcommand{\leq}{\leqslant}
\renewcommand{\geq}{\geqslant}
\renewcommand{\le}{\leqslant}
\renewcommand{\ge}{\geqslant}

\newcommand{\dt}{\mathrm{d}t}

\newcommand{\ab}[1]{\begingroup\color{black}#1\endgroup}

\newcommand{\modif}[1]{\begingroup\color{black}#1\endgroup}

\newcommand{\R}{\mathbb{R}}
\newcommand{\N}{\mathbb{N}}
\newcommand{\C}{\mathbb{C}}

\newcommand{\D}{\mathbb{D}}
\newcommand{\K}{\mathbb{K}}
\newcommand{\T}{\mathbb{T}}

\newcommand{\fhat}{\widehat{f}}
\newcommand{\ghat}{\widehat{g}}
\newcommand{\Nhat}{\widehat{N}}

\newcommand{\Ntarget}{\Lap N_T}
\newcommand{\Cnu}{\C_{\nu}}
\newcommand{\rank}{\operatorname{rank}}
\newcommand{\Top}{\mathcal T}
\newcommand{\infpp}{\sigma}
\newcommand{\Lap}[1]{\widehat{#1}}
\newcommand{\nuassu}{\Bar{\nu}}
\DeclareMathOperator{\Ree}{Re}
\renewcommand{\Re}{\Ree}
\DeclareMathOperator{\Imm}{Im}
\renewcommand{\Im}{\Imm}
\newcommand{\Lprod}{\mathscr L_\nu}
\title{Stabilization of Integral Difference Equations by Solving a Corona Problem}
\author{Adam Braun$^{1}$, Jean Auriol$^{1}$, Lucas Brivadis$^{1}$
\thanks{$^{1}$Universit\'e Paris-Saclay, CNRS, CentraleSup\'elec, Laboratoire des signaux et syst\`emes, 91190, Gif-sur-Yvette, France. {\tt\small firstname.name@l2s.centralesupelec.fr}. This project has received funding from the Agence Nationale de la Recherche (ANR) via grant PANOPLY ANR-23-CE48-0001-01.}%
}
\date{}
\begin{document}
\maketitle
\begin{abstract}
This paper proposes a stabilizing state-feedback control law for vector-valued state systems with scalar control input, governed by a general class of integral difference equations that incorporate both pointwise and distributed input delays.
The proposed controller is expressed through integral operators acting on the state and input histories over a finite time horizon. Closed-loop stability is established by characterizing the controller kernels as solutions of a convolution equation arising from a Corona problem. The existence of such solutions is ensured under a suitable spectral stabilizability condition, and a least-squares procedure is implemented to find them numerically.
The approach extends existing IDE stabilization results to more general settings, allowing for an arbitrary number of pointwise delays affecting both the state and input, without requiring commensurability assumptions. 
\end{abstract}
\section{Introduction}
Integral Difference Equations (IDEs), which incorporate both pointwise and distributed delays, form a broad class of functional equations arising in the modeling of systems with transport phenomena or measurement delays~\cite{Niculescu2001DelayEffects}.
Beyond their intrinsic interest, IDEs have recently gained attention due to their strong connections with partial differential equations (PDEs). In particular, IDEs and interconnected one-dimensional linear hyperbolic balance laws have equivalent stability properties~\cite{auriol_hdr,bastin_coron}, enabling the design of control strategies for PDEs within the IDE framework~\cite{braun2025stabilizationchainhyperbolicpdes,braun_stab_3_2inputs} or~\cite{Jean_Adam_Ouidir_Mathieu}.
Stabilization results for IDEs, under a controllability assumption, have progressively expanded in scope, from scalar systems with a single pointwise delay to more general delay configurations and input structures~\cite{auriol2024stabilizationintegraldelayequations,AuriolSIAM,BPAuriol_fredholm}. A concise comparison of these works is provided in Table~\ref{tab:comparison}.
In this paper, we extend the existing literature by relaxing the controllability requirement to a stabilizability condition, generalizing the framework to vector-valued state systems, and allowing arbitrary numbers of pointwise delays affecting both the state and the input without commensurability assumptions.
Our approach relies on reformulating the control design problem as a Corona problem. We show that exponential stabilizability by means of an autoregressive control scheme can be achieved by solving a convolution equation associated with a Corona-type condition~\cite{Duren2000Hp}, in line with known connections between Corona problems and the controllability of IDEs~\cite{fueyo2025lqapproximatecontrollabilityfrequency}. The resulting control law is explicit and easy to implement, as illustrated by numerical simulations.
\begin{table}[h!]
\centering
\caption{Comparison with prior works}
\label{tab:comparison}
\footnotesize
\setlength{\tabcolsep}{3pt}
\begin{tabular}{llll}
\toprule
Ref. & Pointwise delays & Assumption & Scope \\
\midrule
\cite{auriol2024stabilizationintegraldelayequations} 
& 1 state delay + 1 input delay
& Ctrl. 
& Scalar \\
\cite{AuriolSIAM} 
&  1 state delay + 0 input delay
& Ctrl. 
& Scalar \\
\cite{BPAuriol_fredholm} 
& Multi-delays on state and input
& Ctrl.+comm. 
& Scalar \\
\textbf{This paper} 
& Multi-delays on state and input
& \textbf{Stab.} 
& \textbf{Vector} \\
\bottomrule
\end{tabular}

\vspace{2pt}
\raggedright
\scriptsize
\textit{Ctrl.: controllability; comm.: commensurability; Stab.: stabilizability.}
\end{table}


\section{Mathematical setting and notation}
The set of positive integers is denoted by $\N^*$.
Let $n,m\in\N_{>0}$ and $r\in [1,\infty)$.
The identity matrix in $\R^{n\times n}$ is denoted by $I_n$ (or $I$ when no confusion arises). For a set $C\subset\R$, $\mathbbb{1}_C$ denotes its indicator function. The Lebesgue measure is denoted by $\dt$. The operator norm is denoted by $\|\cdot\|_{op}$, and $|\cdot|$ stands for a matrix norm.
For $C\subset\R$ with nonempty interior, we denote
$
L^r(C,\R^{n\times m}) := \Big\{ M: C\to \R^{n\times m} \ \big|\ \|M\|_{L^r}^r := \int_C |M(t)|^r \dt < \infty \Big\}.$
For $\K\in\{\R,\C\}$ and $\nu>0$, the weighted space $L^2_\nu((0,\infty),\K)$ is the space of measurable functions with values in $\K$ such that
$
\|f\|_{L^2_\nu} := \left(\int_0^\infty e^{2\nu t}|f(t)|^2 \dt\right)^{1/2}<\infty.$
For $x\in L^r(C,\R^n)$ and $t\in C$, the shifted trajectory $x_t$ is defined by $x_t(\tau):=x(t+\tau)$ whenever $t+\tau\in C$. For $a\in\R$, $\delta_a$ denotes the Dirac measure at $a$.
The Laplace transform is denoted by $\mathcal{L}$, and $\widehat{f}$ denotes the Laplace transform of $f$ when defined. For a matrix-valued finite Borel measure $q$, its Laplace transform is
$
\widehat{q}(z) = \int_0^\infty e^{-zs} q(ds),
$
which is well defined on $\C$ if $q$ has compact support. The convolution of $q$ with a function $f$ is defined by
$
(q*f)(t) := \int_\R q(ds)\, f(t-s),
$
whenever it is well defined.
We denote by $\mathbb{D} := \{s\in\C:|s|<1\}$ the unit disk, $\mathbb{T} := \{s\in\C:|s|=1\}$ the unit circle, and for $\nu\ge0$,
$
\C_\nu := \{s\in\C:\Re s>-\nu\}.
$
We denote by $H^2(\mathbb{D})$, $H^\infty(\mathbb{D})$, $H^2(\C_\nu)$, and $H^\infty(\C_\nu)$ the corresponding Hardy spaces (see e.g.~\cite{Partington_2004}). In particular,
$
\|F\|_{H^2(\C_\nu)}^2 := \sup_{x>-\nu} \frac{1}{2\pi}\int_\R |F(x+i\omega)|^2 d\omega, 
\|F\|_{H^\infty(\C_\nu)} := \sup_{s\in\C_\nu} |F(s)|.
$
\section{Problem statement}
\subsection{Integral difference equation}
Let us consider the following \emph{vector-valued state, scalar-input IDE}, defined for any 

$x_0$ in $L^r([-\tau_*, 0], \R^n)$ by
\begin{equation}\label{main IDE}
\begin{cases}
 x(t) - \sum_{k=1}^{\infty} A_k x(t-\tau_k) - 
 \int_0^{\tau_*} N(\eta) x(t-\eta)\,d\eta \\
 =\sum_{j=0}^{\infty} B_j U(t-\theta_j) + 
 \int_0^{\theta_*} M(\eta) U(t-\eta)\,d\eta,
 ~ \textit{$t>0$},\\[4pt]
 x(t) = x_0(t), \quad \textit{$-\tau_* \le t \le 0$}.
\end{cases}
\end{equation}
with $\tau_* >0$ and $\theta_*>0$. 
The sequence of $n\times n$ matrices $(A_k)_{k\in \mathbb{N}^*}$ and $n\times 1$ matrices $(B_j)_{j\in \mathbb{N}}$ are such that  $\sum_{k=1}^\infty  |A_k| +\sum_{j=0}^\infty  |B_j| <\infty$.
The sequences $(\tau_k)_{k\in \mathbb{N}^*}$ and $(\theta_j)_{j\in \mathbb{N}}$ are strictly increasing and for all $k\in \mathbb{N}^*$, $\tau_k \in (0, \tau_*]$, for all $j\in \mathbb{N}$, $\theta_j \in [0, \theta_*]$.
The kernels \(N\) and \(M\) belong respectively to \(L^2((0,\tau_*), \mathbb{R}^{n\times n})\) and \(L^2((0,\theta_*), \mathbb{R}^{n})\).
The function $U$ is the control input, it takes values in $\R$. The input acts on system~\eqref{main IDE} via both pointwise and distributed delays. Its initial condition $U_0$ is in $L^r([-\theta_*, 0], \R)$.
A function $x$ is a solution of~\eqref{main IDE} if, for all $t \ge 0$, we have $x_t \in L^r([-\tau_*,0])$, and system~\eqref{main IDE} holds for almost every $t \geq -\tau_*$.
The well-posedness of system~\eqref{main IDE} in open-loop is ensured by~\cite[Proposition 2]{braun2026spectralexponentialstabilitycriterion}.
\subsection{Objective and Candidate Control Law}

The objective of this article is to exponentially stabilize system \eqref{main IDE} by means of a dynamical autoregressive control law satisfying, for $U_0 \in L^r([-\theta_*,0],\R)$,
\begin{equation}\label{eq:U_form}
\begin{cases}
 U(t) = \sum_{i=1}^n\int_0^{\min(t,S_i)}g_i(\eta)x_i(t-\eta)d\eta \\
 \qquad\quad+\int_0^{\min(t,S_{n+1})} f(\eta)U(t-\eta)d\eta,
 \quad \textit{$t>0$},\\[4pt]
 U(t) = U_0(t), \quad \textit{$-\theta_* \le t \le 0$}.
\end{cases}
\end{equation}
where for all $i\in \{1,\ldots,n\}$, the controller gains $g_i$ and $f$ are square integrable functions compactly supported in $[0,S_i]$, respectively $[0,S_{n+1}]$ to be specified later. The support lengths $S_j$ will also be defined later.
Equation~\eqref{eq:U_form} together with the IDE~\eqref{main IDE} form the closed-loop system, its well-posedness is ensured by~\cite[Proposition 2]{braun2026spectralexponentialstabilitycriterion}.
Our goal is to find the gains such that the closed-loop~\eqref{main IDE}-\eqref{eq:U_form} is exponentially stable,
i.e. that there exist $\nu>0$ and $C\geq 1$ such that for all $(x_0, U_0)\in L^r([-\tau_*, 0], \R^n)\times  L^r([-\theta_*, 0], \R)$, for all $t\geq0$,
$$\|x_t\|_{L^r} +\|U_t\|_{L^r}\leq Ce^{-\nu t}(\|x_0\|_{L^r}+\|U_0\|_{L^r}),$$
where $(x_t, U_t)$ is the corresponding solution to the closed-loop system \eqref{main IDE}-\eqref{eq:U_form}.

\section{Main result}

\subsection{Design assumptions}
\label{section:design assumptions}
The characteristic equation is derived by formally applying the Laplace transform to~\eqref{main IDE} under zero initial conditions; this is sufficient for stability analysis, since the characteristic equation of such IDEs is independent of the initial condition~\cite{braun2026spectralexponentialstabilitycriterion,halebook}. 
This yields
$\Lap q(s)\Lap x(s) = \Lap p(s)\Lap U(s),$
where for all $s\in \C$,
$$
\Lap q(s):=\Delta_0 (s) -\Lap N(s) \in \C^{n\times n},$$
with 
\begin{equation} \label{def:pp}\Delta_0(s):=I-\sum_{k=1}^\infty A_k e^{-\tau_k s}.\end{equation}
and
$$\Lap p(s):=\sum_{j=0}^\infty B_j e^{-\theta_j s} + \Lap M(s)\in \C^{n\times 1}.$$
Notice that $\Lap q$ and $\Lap p$ are the Laplace transforms of the finite compactly supported Borel measures defined by,
\begin{equation}\label{def:mesures_Q_P}
q(\dt) = I_n\delta_0 -\sum^\infty_{k=1} A_k\delta_{\tau_k} - N\dt, p(\dt) = \sum^\infty_{j=0} B_j\delta_{\theta_j} + M\dt.
\end{equation}
As in~\cite{auriol2024stabilizationintegraldelayequations}, we will assume the principal part (i.e without the integral term and with $U\equiv 0$) of the IDE~\eqref{main IDE}
to be exponentially stable. This assumption is \emph{necessary} to avoid
having an infinite number of unstable poles; see~\cite{halebook} and~\cite[Proof of Theorem 8]{AURIOL2019144}. Such poles would make the design of a delay-robust control law impossible (see~\cite[Theorem 1.1]{Logemann1996}).
 By Lemma~\ref{stable_lemma} and~\cite[Proposition A.5]{felipe}, the exponential stability of the principal part of the IDE~\eqref{main IDE} is equivalent to the following assumption.
\begin{assum}[Stability of the principal part]
    \label{assum:pp_stable}
There exist $\nuassu>0$ and $\infpp>0$ such that, for all $s\in \C_{\nuassu}$, $$
       | \det \Delta_0(s)|>\infpp.$$
\end{assum}
Therefore, the main difficulty in stabilizing the integral delay equation~\eqref{main IDE} stems from the presence of the term
$
\int_{0}^{\tau_*} N(\eta)x(t-\eta)\,d\eta,$
which may have a destabilizing effect on the system dynamics. The proposed strategy consists in partially eliminating this integral contribution through an appropriate choice of the control law~$U$.
To do so, we will use the following assumption that extends the controllability condition of~\cite[Assumption 1]{auriol2024stabilizationintegraldelayequations} to the vector-valued, multi-delay setting. However, it is required to hold only on the right half-plane $\C_{\nuassu}$ for $\nuassu > 0$, thereby constituting a strong stabilizability condition, consistent with the case without distributed delays (see~\cite[Theorem 3.1]{hale_feedback_2} and the discussion after equation~(3.3) in the same reference).
\begin{assum}[Spectral stabilizability]\label{assum:spectral_stabilisabilité}
   \modif{For $\nuassu$ as in Assumption~\ref{assum:pp_stable}, for all $s\in \C_{\nuassu}$},
    $$\rank [\Lap q(s),-\Lap p(s)] = n.$$
\end{assum}
\subsection{Result statement and discussion}

\begin{theorem}\label{main_thm} Under Assumptions \ref{assum:pp_stable} and \ref{assum:spectral_stabilisabilité},
there exist controller gains $$f, g_1, \dots, g_n$$ in $L^2((0,\infty),\R)$, compactly supported, such that the closed-loop system \eqref{main IDE}-\eqref{eq:U_form} is exponentially stable.
\end{theorem}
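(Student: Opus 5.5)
Taking Laplace transforms of the closed loop \eqref{main IDE}--\eqref{eq:U_form} gives $\widehat U=\sum_i\widehat g_i\widehat x_i+\widehat f\,\widehat U$ plus initial-condition terms. So the characteristic matrix of the augmented state $(x,U)$ is
\[
\mathcal M(s)=\begin{pmatrix}\widehat q(s) & -\widehat p(s)\\ -\widehat g(s)^\top & 1-\widehat f(s)\end{pmatrix}\in\C^{(n+1)\times(n+1)} .
\]
The plan is to choose compactly supported $L^2$ kernels $f,g$ so that $\det\mathcal M(s)$ coincides with a function bounded away from zero on $\C_\nu$, for some $0<\nu\le\nuassu$. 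Exponential stability then follows from the spectral criterion of~\cite{braun2026spectralexponentialstabilitycriterion}. That criterion applies because the principal part of the augmented system is $\mathrm{diag}(\Delta_0,1)$, which is stable by Assumption~\ref{assum:pp_stable}, and because all other terms are $L^2$ kernels with compact support.

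Expanding along the last row, $\det\mathcal M=(1-\widehat f)\det\widehat q-\widehat g^\top\operatorname{adj}(\widehat q)\widehat p$. Denote by $a_0(s)=\det\widehat q(s)$ and $a_i(s)$, $i=1,\dots,n$, the maximal minors of $[\widehat q,-\widehat p]$, up to sign conventions. Then $\det\mathcal M=a_0-\widehat f a_0-\sum_i\widehat g_i a_i$. I will target $\det\mathcal M=\det\Delta_0$, which has modulus at least $\infpp$ on $\C_{\nuassu}$. This amounts to the Bézout/convolution identity
\[
\widehat f\,a_0+\sum_{i=1}^n\widehat g_i\,a_i=a_0-\det\Delta_0 ,
\]
whose right-hand side is the transform of an absolutely continuous compactly supported measure: every term of $\det\widehat q$ other than $\det\Delta_0$ contains at least one factor $\widehat N$. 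Each $a_j$ is the transform of a compactly supported finite measure, so it is bounded on $\C_{\nuassu}$. By Assumption~\ref{assum:spectral_stabilisabilité}, the minors $a_0,\dots,a_n$ have no common zero on $\C_{\nuassu}$.

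The main step is to upgrade "no common zero" to the corona condition $\sum_j|a_j(s)|\ge\epsilon>0$ on a slightly smaller half-plane $\C_\nu$. For $|s|$ large with $\Re s>-\nu$, Riemann--Lebesgue gives $\widehat N,\widehat M\to0$, so $a_0(s)\approx\det\Delta_0(s)$, which is at least $\infpp/2$ in modulus. On the remaining bounded region $\{|s|\le R,\ \Re s\ge-\nu\}$, which is compact, continuity and the absence of common zeros give a positive lower bound. Here I take $\nu<\nuassu$ so that the closure stays in $\C_{\nuassu}$. The Carleson corona theorem on $H^\infty(\C_\nu)$ then yields $H^\infty$ solutions, but we need them to be transforms of compactly supported $L^2$ kernels.

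This is the obstacle I expect. I would handle it by a division-type argument. Since $a_0$ is invertible for $|\Im s|$ large, the problem reduces to a finite-dimensional interpolation at the finitely many zeros of $a_0$ in $\C_\nu$, counted with multiplicities; there are finitely many because zeros accumulate only where $\det\Delta_0$ is small, which is excluded. Concretely, I choose $\widehat g_i$ as transforms of compactly supported $L^2$ functions, for instance polynomials times indicator functions, whose derivatives at those zeros make $a_0-\det\Delta_0-\sum_i\widehat g_i a_i$ vanish there to the right order. This linear interpolation system is solvable because at each zero some $a_i\neq0$. Finally, I define $\widehat f=(a_0-\det\Delta_0-\sum_i\widehat g_ia_i)/a_0$ and must show it is the transform of a compactly supported $L^2$ function. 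This is a Paley--Wiener-type argument: the quotient is entire of exponential type and in $H^2$ of both half-planes. The numerator is the transform of a compactly supported $L^2$ function, since it is a sum of convolutions of measures with $L^2$ functions; but $a_0$ contains the nonsmooth principal part $\det\Delta_0$, so the division is delicate. If this fails, my fallback is to keep $\widehat f$ in $H^\infty(\C_\nu)$, replacing it by the transform of a truncated $L^2$ kernel with a small-gain estimate. Once the identity holds, $\det\mathcal M=\det\Delta_0$ on $\C_\nu$, and the spectral criterion concludes.
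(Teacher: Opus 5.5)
Your main line breaks at the final division step, and the failure is structural. With the stabilizability assumption alone, the exact target $\det\mathcal M=\det\Delta_0$ cannot in general be reached with compactly supported kernels. Such kernels have entire transforms, so by the identity theorem $\widehat f a_0+\sum_i\widehat g_i a_i=a_0-\det\Delta_0$ would have to hold on all of $\C$. The spectral stabilizability assumption only rules out common zeros of the minors on $\C_{\nuassu}$. At a common zero $s_0$ with $\Re s_0\le-\nuassu$, the left-hand side vanishes for every choice of gains, while $\det\Delta_0(s_0)$ need not. For instance, take $n=1$ and all $A_k=0$, so $\det\Delta_0\equiv1$, with $\widehat p$ and $1-\widehat N$ sharing a real zero $s_0<-\nuassu$.

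Correspondingly, your claim that $\widehat f=(a_0-\det\Delta_0-\sum_i\widehat g_i a_i)/a_0$ is entire of exponential type is false. You only forced the numerator to vanish at the finitely many zeros of $a_0$ in $\C_\nu$. But $a_0$ has in general infinitely many zeros in the left half-plane, where nothing forces the numerator to vanish, and at common zeros nothing can. The quotient has poles there, so it is not the transform of a compactly supported function.

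Your one-line fallback is the correct repair, and it is exactly the paper's mechanism. You give up exactness, solve the B\'ezout identity exactly in a weighted space, truncate, and absorb the truncation error with the margin $\infpp$ from Assumption~\ref{assum:pp_stable}. It has to be done in $H^2$, not $H^\infty$: truncating a kernel with a controlled error requires $f\in L^2_\nu$, i.e.\ $\widehat f\in H^2(\C_\nu)$.

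Your construction does give this, provided you interpolate at the finitely many zeros of $a_0$ in a slightly larger half-plane $\C_{\nu_1}$ with $\nu<\nu_1<\nuassu$. The numerator is then the transform of a compactly supported $L^2$ function. Outside a compact set one has $|a_0|\ge\infpp/2$ on $\C_\nu$, and on the compact part the quotient is bounded. Hence $\widehat f\in H^2(\C_\nu)$.

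Now set $f_S:=f\,\mathbbb{1}_{[0,S]}$ and pick $0<\mu<\nu'<\nu$. On $\C_\nu$ the closed-loop determinant equals $\det\Delta_0+a_0\,\mathcal L(f-f_S)$. By the tail estimate~\eqref{eq:weighted_tail} and Proposition~\ref{prop:h2_pointwise_bound},
\[
|a_0(s)\,\mathcal L(f-f_S)(s)|\le \|a_0\|_{H^\infty(\C_\nu)}\,\frac{e^{-(\nu-\nu')S}}{\sqrt{2\mu}}\,\|f\|_{L^2_\nu}<\infpp,\qquad s\in\C_{\nu'-\mu},
\]
for $S$ large. So $\det\mathcal M\neq0$ on $\C_{\nu'-\mu}$. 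Choosing conjugate-symmetric interpolation data makes all kernels real.

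Completed this way, your route genuinely differs from the paper's. The paper obtains the exact $L^2_{\nu_*}$ solution from Carleson's corona theorem on the disk and truncates all $n+1$ components. You instead use that $a_0=r_{n+1}$ is bounded below near infinity, the same fact behind Lemma~\ref{lem:hyp_corona}. That reduces the B\'ezout problem to a finite Hermite interpolation, avoids the corona theorem entirely, and yields compactly supported $g_i$ with no truncation.
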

In previous contributions (see Table~\ref{tab:comparison}), the gains were obtained via the inversion of a Fredholm operator. The difficulty with this approach was to show that the considered operator was Fredholm. In contrast, in this paper we formulate the gain computation as a Corona problem, yielding a simpler and more natural approach applicable to a broader class of IDEs.
\begin{remark}
 Theorem~\ref{main_thm} remains valid when the initial condition of system~\eqref{main IDE} is taken to be a bounded Borel-measurable function or a function of bounded variation.
Moreover, the matrix-valued kernels \(N\) and \(M\) could also be taken in \(L^1\), by approximating them with $L^2$ kernels.
\end{remark}
The rest of the paper
is dedicated to the proof of Theorem~\ref{main_thm}.
\section{Proof of the main result}
\subsection{The characteristic equation}
Let $Y(t):= (x(t), U(t)) \in \R^{n+1}$
be the extended variable associated to the closed-loop system~\eqref{main IDE}-\eqref{eq:U_form}. Formally taking the Laplace transform of~\eqref{main IDE}-\eqref{eq:U_form}, we obtain, for all $s\in \C$,
$A(s)\widehat Y(s)=0,$
where,
\begin{equation}
    \label{matrix_characteristic}
    A(s):= \begin{bmatrix}\Lap q(s)& -\Lap p(s)\\
    -\ghat(s) &1 - \fhat(s)
    \end{bmatrix},
\end{equation}
with 
$$\ghat(s) = \begin{bmatrix}
    \ghat_1(s)~\ldots~\ghat_n(s)
\end{bmatrix} \in \C^{1\times n}.$$
    The characteristic equation associated to the closed-loop system~\eqref{main IDE}-\eqref{eq:U_form} is $\det A = 0$.
\begin{lemma}{\cite[Theorem 6]{braun2026spectralexponentialstabilitycriterion}} \label{stable_lemma}
    The closed-loop system~\eqref{main IDE}-\eqref{eq:U_form} is exponentially stable if and only if there exists $\nu>0$ such that
    $\det A(s)\neq 0$ for all $s\in \Cnu$.
\end{lemma}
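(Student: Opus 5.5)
The plan is to rewrite the closed loop \eqref{main IDE}-\eqref{eq:U_form} as a single renewal-type equation for $Y=(x,U)$ and to use the Wiener--Gelfand theory of exponentially weighted measure algebras (as in Gripenberg--Londen--Staffans). Concretely, for $t>0$ the closed loop reads
\begin{equation*}
(\mu * Y)(t) = h(t),
\end{equation*}
with the following ingredients.
\begin{itemize}
\item[(i)] $\mu$ is the compactly supported $(n+1)\times(n+1)$ matrix measure with $\Lap{\mu}=A$ from \eqref{matrix_characteristic}, that is, $\mu = \begin{bmatrix} q & -p\\ -g\,\dt & \delta_0 - f\,\dt\end{bmatrix}$ built from \eqref{def:mesures_Q_P}.
\item[(ii)] $h$ collects all contributions of the initial data $(x_0,U_0)$. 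It is an $L^r$ function supported in $[0,\max(\tau_*,\theta_*,S_j)]$, with $\|h\|_{L^r}\le C(\|x_0\|_{L^r}+\|U_0\|_{L^r})$ by Young's inequality, since all kernels are finite measures.
\end{itemize}
I also split $\mu=\mu_0+\mu_{ac}$. Here $\mu_{ac}$ is the absolutely continuous part, with kernels $N,M,g,f$ in $L^1$. The atomic part $\mu_0$ has Laplace transform $\begin{bmatrix}\Delta_0 & -\sum_j B_j e^{-\theta_j s}\\ 0 & 1\end{bmatrix}$, which is block upper triangular, so $\det\Lap{\mu_0}=\det\Delta_0$.

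\emph{Sufficiency.} Assume $\det A(s)\neq0$ on $\Cnu$. The key step is a uniform lower bound $\inf_{\Re s\ge -\nu'}|\det A(s)|>0$ for some $0<\nu'<\min(\nu,\nuassu)$, which I would obtain in three moves.
\begin{itemize}
\item[(a)] Use the exponential weight $e^{\nu' t}$ and the compact supports: the entries of $\Lap{\mu_{ac}}$ are Laplace transforms of $L^1(e^{\nu' t}\dt)$ functions, so the Riemann--Lebesgue lemma gives $\Lap{\mu_{ac}}(s)\to0$ as $|\Im s|\to\infty$, uniformly in the strip $-\nu'\le\Re s\le R$.
\item[(b)] For $\Re s\ge R$ with $R$ large, $\Lap{\mu_{ac}}(s)$ is small as well.
\item[(c)] Combining (a) and (b), $\det A(s)=\det\Delta_0(s)+o(1)$ outside a compact set. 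By Assumption~\ref{assum:pp_stable}, $|\det A|>\infpp/2$ there. On the remaining compact set, continuity and nonvanishing give a positive minimum.
\end{itemize}
With this bound, the Wiener-type theorem for the Banach algebra $\mathcal M(\R_+,e^{\nu' t})$ (invertibility iff the determinant of the transform is bounded away from $0$ on $\overline{\C_{\nu'}}$) provides a resolvent measure $\rho$ with $\rho*\mu=\delta_0 I$ and $\int e^{\nu' t}|\rho|(dt)<\infty$. Then $Y=\rho*h$, and the weighted Young inequality gives $\|e^{\nu' t}Y\|$-type bounds on each window $[t-\max(\tau_*,\theta_*),t]$, hence the exponential estimate for $\|x_t\|_{L^r}+\|U_t\|_{L^r}$.

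\emph{Necessity.} If for every $\nu>0$ there is a zero of $\det A$ in $\Cnu$, I take zeros $s_k$ with $\Re s_k\to \alpha\ge0$ (or $\Re s_k\ge0$ for some $k$), together with kernel vectors $v_k$ of $A(s_k)$. Each $Y(t)=e^{s_k t}v_k$ (real parts taken) solves the closed loop with the matching initial history. Its ratio $\|Y_t\|/\|Y_0\|$ decays no faster than $e^{\Re s_k t}$, which contradicts any fixed decay rate $\nu>0$ once $\Re s_k>-\nu$.

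The main obstacle is step (a)--(c) of the sufficiency part, namely the uniform lower bound near infinity. Nonvanishing alone is not enough for infinite delay sums; one needs Assumption~\ref{assum:pp_stable} together with the fact that the coupling term $-\sum_jB_je^{-\theta_js}$, which is not small, sits in the off-diagonal block and therefore does not affect $\det\Lap{\mu_0}$. I would first verify the block-triangular structure carefully, including the possible atom $B_0\delta_0$ when $\theta_0=0$, before invoking the Wiener theorem.
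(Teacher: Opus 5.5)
The paper does not prove this lemma; it quotes \cite[Theorem 6]{braun2026spectralexponentialstabilitycriterion}. Your renewal-equation/weighted-resolvent route is the natural one, but each direction has a genuine problem.

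\emph{Sufficiency.} Step (c) invokes Assumption~\ref{assum:pp_stable}, which is not a hypothesis of the lemma. The lemma is needed without it: right after Assumption~\ref{assum:pp_stable}, the paper uses it to show that this assumption is \emph{equivalent} to stability of the principal part. As written, you only prove the ``if'' part under Assumption~\ref{assum:pp_stable}. The missing step is that non-vanishing on the open half-plane $\Cnu$ already gives $\inf_{\Re s\ge-\nu'}|\det A(s)|>0$ for every $\nu'<\nu$, so your remark that non-vanishing is not enough is mistaken. Here is why.
The translates $s\mapsto\det A(s+i\omega)$ are uniformly bounded on $\Cnu$, because all delays are at most $\max(\tau_*,\theta_*)$ and all kernels are compactly supported; hence they form a normal family.
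As $|\omega|\to\infty$, the absolutely continuous contributions vanish locally uniformly. So any limit $F$ is a limit of translates of $\det\Delta_0$, and $F(s)\to1$ as $\Re s\to+\infty$ since $\tau_k\ge\tau_1>0$; hence $F\not\equiv0$.
Now suppose $\det A(s_k)\to0$ with $\Re s_k\in[-\nu',R]$ and $|\Im s_k|\to\infty$. After extraction, $\Re s_k\to\sigma_0$ and $\det A(\cdot+i\Im s_k)\to F$ locally uniformly with $F(\sigma_0)=0$. Hurwitz's theorem then yields zeros of $\det A$ near $s_k$ inside $\Cnu$, a contradiction. The remaining cases are handled as in your (b)--(c).
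Note also that the invertibility criterion you quote fails in the full weighted measure algebra (Wiener--Pitt phenomenon). You need its version for measures without singular continuous part, which your splitting $\mu=\mu_0+\mu_{ac}$ does provide.

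\emph{Necessity.} The claim that $\Re(e^{s_kt}v_k)$ solves the closed loop for a matching initial history is false. In \eqref{eq:U_form} the integrals run over $[0,\min(t,S_i)]$, so the controller ignores the values of $x$ and $U$ before time $0$. On $(0,\max_jS_j)$ the exponential therefore does not satisfy the control law in general. This cannot be repaired, because zeros of $\det A$ at which the last row of $A$ vanishes are never excited.
For instance, take $n=1$, all $A_k=0$, $N=0$, $g_1=0$, and $f=\mathbbb{1}_{[0,1]}/(1-e^{-1})$ with $S_2=1$. Then $\det A=1-\fhat$, which vanishes at $s=1$. Yet \eqref{eq:U_form} is a homogeneous Volterra equation of the second kind on $(0,\infty)$, so $U\equiv0$ for $t>0$ and $x(t)=0$ for $t>\theta_*$. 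The closed loop is therefore exponentially stable, and the ``only if'' direction is false for the closed loop as literally defined.
Your exponential-solution argument is correct when the whole history of the unknown is free initial data, e.g.\ for the principal part, which is the only place where the paper uses this direction. The main theorem only needs the ``if'' direction under Assumption~\ref{assum:pp_stable}, which your argument does cover.
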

For all $j\in \{1,\ldots,n+1\}$ let $r_j$ be the minor of order $n$, i.e. the determinant of the $n\times n$ submatrix of $[\Lap q, -\Lap p]$ obtained by removing the column $j$. Then Assumption~\ref{assum:spectral_stabilisabilité} implies that for all $s\in \C_{\nuassu}$,
    \begin{equation} \label{eq:rj_rang_1}\rank[r_1(s)~\ldots~r_{n+1}(s)] =1.\end{equation}
The following lemma is a consequence of Cauchy-Schwarz inequality.
\begin{lemma}\label{lem:q_p_Hinfty_infinite} For all \(\nu>0\),
each entry of \(\Lap q\) and each component of \(\Lap p\) belongs to \(H^\infty(\Cnu)\).
\end{lemma}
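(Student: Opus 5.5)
Lemma~\ref{lem:q_p_Hinfty_infinite} is a direct bound on the Laplace transforms of the measures $q$ and $p$ in~\eqref{def:mesures_Q_P}. The idea is to split each measure into its atomic part and its absolutely continuous part and bound each piece uniformly on $\Cnu$. Holomorphy is not an issue: both measures have compact support, so $\Lap q$ and $\Lap p$ are entire, as noted after~\eqref{def:mesures_Q_P}. It therefore only remains to prove uniform boundedness on $\Cnu$.

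\emph{Atomic parts.} Fix $s\in\Cnu$, so $\Re s>-\nu$. For every $k\geq 1$ we have $\tau_k\in(0,\tau_*]$, hence
$$|e^{-\tau_k s}| = e^{-\tau_k\Re s}\leq e^{\nu\tau_k}\leq e^{\nu\tau_*}.$$
Consequently, for any entry $(i,l)$,
$$\Big|\big(\Delta_0(s)\big)_{il}\Big|\leq 1+e^{\nu\tau_*}\sum_{k\geq1}|A_k|<\infty,$$
using that entries are dominated by the matrix norm up to a constant, together with the summability assumption $\sum_k|A_k|<\infty$. The same bound holds for $\sum_j B_j e^{-\theta_j s}$, with $\theta_*$ in place of $\tau_*$. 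Since the series converge normally on $\Cnu$, their sums are holomorphic there; this also follows from the entire-function remark above.

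\emph{Distributed parts.} For the entry $(i,l)$ of $\Lap N$, Cauchy--Schwarz gives
$$|\Lap N_{il}(s)| = \Big|\int_0^{\tau_*} e^{-s\eta}N_{il}(\eta)\,d\eta\Big|\leq \Big(\int_0^{\tau_*}e^{2\nu\eta}\,d\eta\Big)^{1/2}\|N_{il}\|_{L^2(0,\tau_*)}.$$
The right-hand side is finite and independent of $s$, because $N\in L^2((0,\tau_*),\R^{n\times n})$. The same estimate with $\theta_*$ bounds $\Lap M$.

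Summing the two contributions shows that each entry of $\Lap q=\Delta_0-\Lap N$ and each component of $\Lap p$ is bounded and holomorphic on $\Cnu$, i.e.\ belongs to $H^\infty(\Cnu)$. There is no real obstacle here. The only point needing care is that the uniform bound on $|e^{-\tau_k s}|$ relies on the delays being confined to the bounded interval $[0,\tau_*]$ (resp.\ $[0,\theta_*]$); it is this boundedness that makes the constant finite for every $\nu>0$, not only for $\nu=0$.
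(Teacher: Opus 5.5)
Your proof is correct and follows the paper's approach. The paper's proof is the single remark that the lemma follows from the Cauchy--Schwarz inequality, which is your bound on $\Lap N$ and $\Lap M$. You additionally make explicit two points the paper leaves implicit: the atomic parts are bounded on $\Cnu$ by $e^{\nu\tau_*}\sum_k|A_k|$ and $e^{\nu\theta_*}\sum_j|B_j|$, and the transforms are holomorphic there.
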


A direct consequence of Lemma~\ref{lem:q_p_Hinfty_infinite} is that for all \(j \in \{1,\dots,n+1\}\), 
$r_j \in H^\infty(\C_{\nuassu}).$
Furthermore, each \(r_j\) can be expressed as a finite sum of products of Laplace transforms of finite, compactly supported, real-valued Borel measures. In the time domain, this corresponds to a finite sum of convolutions of such measures. Since this class is closed under convolution and finite sums, \(r_j\) is itself the Laplace transform of a finite, compactly supported, real-valued Borel measure \(R_j\), i.e., $
r_j(s) = \int_{[0,\infty)} e^{-st}\, dR_j(t)$ for all $s \in \C_{\nuassu}$.
We decompose the characteristic equation of~\eqref{main IDE}-\eqref{eq:U_form} into stable and unstable parts.
\begin{prop} \label{prop:calcul_det_A} For $\nuassu $ as in Assumption~\ref{assum:spectral_stabilisabilité}, there exists $T>0$ and $N_T$ in \(L^2((0,T),\R)\) such that
$\Ntarget\in H^2(\C_{\nuassu})$ and for all $s\in \C_{\nuassu}$
    \begin{align}\label{eq:characteristic}\det A(s) &= \det \Delta_0(s) \\ \nonumber&+\Ntarget(s)-\fhat(s)r_{n+1}(s)-\sum_{j=1}^n (-1)^{n+1+j} r_j(s)\ghat_j(s),\end{align}
    where $(r_j)_j$ are the minors of $[\Lap q, -\Lap p]$. 
\end{prop}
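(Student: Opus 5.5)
Expand $\det A(s)$ along its last row, then split $\det \Lap q$ into the principal part $\det\Delta_0$ plus an entire remainder coming from $\Lap N$.

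\textbf{Step 1: Laplace expansion.} Expanding along row $n+1$ of \eqref{matrix_characteristic}, whose entries are $-\ghat_1,\dots,-\ghat_n,1-\fhat$, gives
\begin{equation*}
\det A(s)=(1-\fhat(s))\,C_{n+1,n+1}(s)+\sum_{j=1}^n(-\ghat_j(s))\,C_{n+1,j}(s).
\end{equation*}
Here $C_{n+1,j}=(-1)^{n+1+j}r_j$ is the cofactor obtained by deleting row $n+1$ and column $j$. The remaining $n\times n$ block is exactly $[\Lap q,-\Lap p]$ with column $j$ removed, so its determinant is the minor $r_j$. For $j=n+1$ the sign is $+1$ and $r_{n+1}=\det\Lap q$. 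This gives
\begin{equation*}
\det A=\det\Lap q-\fhat\, r_{n+1}-\sum_{j=1}^n(-1)^{n+1+j}r_j\ghat_j,
\end{equation*}
which already matches \eqref{eq:characteristic} once we set $\Ntarget:=\det\Lap q-\det\Delta_0$.

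\textbf{Step 2: time-domain structure of $\det\Lap q-\det\Delta_0$.} Write $\Lap q=\Delta_0-\Lap N$ and use multilinearity of the determinant in the columns. Then $\det(\Delta_0-\Lap N)-\det\Delta_0$ is a finite sum of terms. Each term is a product of $n$ entries, some from $\Delta_0$ and at least one from $-\Lap N$, with a sign.

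Entries of $\Delta_0$ are Laplace transforms of finite measures $\mu_{ab}=\delta_{ab}\delta_0-\sum_k (A_k)_{ab}\delta_{\tau_k}$, supported in $[0,\tau_*]$ and of total variation at most $1+\sum_k|A_k|$. Entries of $\Lap N$ are Laplace transforms of $L^2$ functions supported in $[0,\tau_*]$.

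Each term is therefore the Laplace transform of a convolution of finite measures with at least one $L^2((0,\tau_*))$ function. We will use two facts:
\begin{itemize}
\item the convolution of a finite measure with an $L^2$ function is $L^2$, by Young's inequality, $\|\mu*h\|_{L^2}\le|\mu|(\R)\|h\|_{L^2}$;
\item the convolution of two compactly supported $L^2$ functions is continuous and compactly supported, hence $L^2$.
\end{itemize}
The supports add up, so each term lies in $[0,n\tau_*]$. Setting $T:=n\tau_*$ and defining $N_T$ as the sum of these real-valued convolutions gives $N_T\in L^2((0,T),\R)$ with $\Lap{N_T}=\det\Lap q-\det\Delta_0$.

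\textbf{Step 3: Hardy-space membership.} Since $N_T\in L^2((0,T))$ is compactly supported, $e^{\nuassu t}N_T\in L^2(0,\infty)$. By the Paley--Wiener theorem, $\Lap{N_T}(\cdot-\nuassu)\in H^2(\C_0)$, that is, $\Lap{N_T}\in H^2(\C_{\nuassu})$. In fact this holds for any $\nu$.

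The main obstacle is the bookkeeping in Step 2. The infinite sums of Dirac masses must be handled as finite measures, using $\sum_k|A_k|<\infty$ for absolute convergence. The cross terms must be shown to genuinely define $L^2$ functions rather than mere measures; this works because every cross term contains at least one factor from $\Lap N$, and that factor absorbs the singular (Dirac) parts through Young's inequality.
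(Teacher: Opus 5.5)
Your proposal is correct and follows the paper's proof essentially step by step. You expand $\det A(s)$ along the last row, use multilinearity in the columns of $\Lap q=\Delta_0-\Nhat$ to isolate $\det\Delta_0$, realize the remaining terms (each containing at least one entry of $\Nhat$) as Laplace transforms of compactly supported real $L^2$ functions, and conclude $\Ntarget\in H^2(\C_{\nuassu})$ by Paley--Wiener, with some points made more explicit than in the paper (Young's inequality for the measure--function convolutions and the explicit choice $T=n\tau_*$).
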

\begin{proof} Let $s\in \C_{\nuassu}$.
    By developing the determinant of $A(s)$ along the last row, we obtain
    $$\det A(s) = (1-\fhat(s))r_{n+1}(s)-\sum_{j=1}^n (-1)^{n+1+j} r_j(s)\ghat_j(s).$$ 
     We now compute $r_{n+1}(s) = \det \Lap q(s)$ using the multilinearity of the determinant acting on the columns of $$\Lap q =[\Delta_{0,1} -\Nhat_1~\ldots~\Delta_{0,n} -\Nhat_n],$$
     where $\Nhat_i$ is the Laplace transform of the $i$-th column of $N\in \R^{n\times n}$ and $\Delta_{0,i}$ is the $i$-th column of $\Delta_0$ defined in~\eqref{def:pp}. We obtain
    $\det \Lap q(s) = \det \Delta_0 + \Ntarget(s),$
    with
\begin{equation} \label{def:Ntarget}
\Lap N_T(s) := (-1)^n\det \Nhat(s) + \sum_{\substack{L \subset \{1,\dots,n\} \\ L \neq \varnothing,\, L \neq \{1,\dots,n\}}} \det M_L(s),
\end{equation}
    where $M_L(s)$ is an $n\times n$ complex matrix with columns $m_j$ defined for all $j\in\{1,\ldots,n\} $ by,
    \begin{equation*} m_j = \begin{cases}
        \Delta_{0,j}\qquad \textit{if $j\notin L$} \\
        -\Nhat_j(s)~~\textit{if $j\in L$}.
    \end{cases}\end{equation*}
Then, \(\Lap N_T \in H^2(\C_{\nuassu})\) and is the Laplace transform of a compactly supported function $N_T \in L^2((0,T),\R)$.

Indeed, each entry of \(\Lap N\) is the Laplace transform of a compactly supported function in \(L^2((0,\infty), \R)\). Hence, its determinant corresponds in the time domain to finite sums of convolutions of compactly supported \(L^2\)-functions. Since convolution preserves both compact support and \(L^2\)-regularity, it follows that \(\det(\Nhat) \in H^2(\C_{\nuassu})\). Similarly, each \(\det M_L(s)\) is a finite sum of products of Laplace transforms of convolutions of measures (arising from the inverse Laplace transform of \(\Delta_0\)~\eqref{def:pp}) and entries of \(\Nhat\). Therefore, it is itself the Laplace transform of a causal, compactly supported function \(N_T\in L^2_{\nuassu}((0,\infty), \R)\). 
 By Proposition~\ref{prop:paley}, \(\Ntarget \in H^2(\C_{\nuassu})\).

\end{proof}
\begin{remark} \label{rmk:n=1}
    If the state dimension is $n=1$, then $N_T=-N$, $R_1(\dt) = -p(\dt)$ and $R_2(\dt) = q(\dt)$ (defined in~\eqref{def:mesures_Q_P}).
\end{remark}
\subsection{A Corona problem for the gains}
By Lemma~\ref{stable_lemma} and in view of Assumption~\ref{assum:pp_stable} and~Proposition~\ref{prop:calcul_det_A}, the exponential stabilization of the closed-loop system~\eqref{main IDE}-\eqref{eq:U_form} reduces to the construction of gains such that there exist \ab{$\mu, \nu$ such that $0<\mu< \nu<\nuassu$} (with $\nuassu$ given in Assumption~\ref{assum:pp_stable}) for which, for all \ab{$s\in\C_{\nu -\mu}$,}
\begin{equation}
    \label{eq:notre_corona_pb}
   |\fhat(s)r_{n+1}(s)+\sum_{j=1}^n (-1)^{n+1+j} r_j(s)\ghat_j(s)-\Ntarget(s)|\leq \infpp.
\end{equation}
This is an approximate Corona problem~\cite[Chapter 12]{Duren2000Hp} (in the sense that the usual Corona problem would be \eqref{eq:notre_corona_pb} with $\infpp=0$).
In Appendix~\ref{appendix:hardy}, we recall some basic facts about Hardy spaces and in Appendix~\ref{appendix:Corona} we state a Corona Theorem. Set
$\Lprod:= \prod_{j= 1}^{n+1} L^2_\nu((0, \infty), \C),$
it is a Hilbert space equipped with the sum of the natural scalar product on $L^2_\nu((0, \infty),\C).$ By Propositions~\ref{prop:paley} \ab{and~\ref{prop:h2_pointwise_bound}}, Problem~\eqref{eq:notre_corona_pb} can be reformulated in the time domain. We want to find $(g,f) \in \Lprod$ real-valued and compactly supported such that,
\ab{\begin{equation} \label{eq:corona_final}\|T_\nu(g, f) -N_T\|_{L^2_\nu} \leq \infpp\sqrt{2\mu} ,\end{equation}}
where
\begin{equation}\label{def_Tnu}
T_\nu:\Lprod\to L^2_\nu((0,\infty),\C),~
T_\nu(v)=\sum_{j=1}^{n+1}(-1)^{n+1+j} R_j*v_j,
\end{equation}
with $R_j = \mathcal{L}^{-1}(r_j).$
Set  $$\|R^\nu\|_{TV} := \sqrt{\|R^\nu_1\|_{TV}^2+\ldots +\|R_{n+1}^\nu\|_{TV}^2},$$
with $R^\nu_j(\dt) := e^{\nu t}R_j(\dt)$ still a finite Borel measure because $R_j$ is compactly supported. Using~\cite[Theorem 3.5, Section 4.3]{GripenbergLondenStaffans1990} (a Young's convolution inequality in weighted spaces), we obtain for all $f\in L^2_\nu((0,\infty), \C)$
$$
\|R_j* f\|_{L^2_\nu}\leq\|R_j^\nu\|_{\mathrm{TV}}\|f\|_{L^2_\nu}.$$
Hence, by Cauchy-Schwarz in $\R^{n+1}$, for all $v\in\Lprod$,
$$\|T_\nu(v)\|_{L^2_\nu} \leq \sum_{j=1}^{n+1}\|R_j^\nu\|_{TV}\|v_j\|_{L^2_\nu}\leq \|R^\nu\|_{TV} \|v\|_{\Lprod}.$$
Thus $T_\nu$ is bounded.
We construct gains satisfying~\eqref{eq:notre_corona_pb} via Theorem~\ref{thm:toeplitz_corona} \modif{(see Appendix~\ref{appendix:Corona})}. We now establish the conditions for its application.
\begin{lemma}
    \label{lem:hyp_corona}
    Under Assumptions~\ref{assum:pp_stable} and
    \ref{assum:spectral_stabilisabilité}, \ab{for all
    \(
        0<\nu_*<\nuassu,
    \)}
    where $\nuassu$ is given by Assumption~\ref{assum:pp_stable}, one has
    \(
        d
        :=
        \inf_{s\in\C_{\nu_*}}
        \sum_{j=1}^{n+1}|r_j(s)|
        >0.
    \)
\end{lemma}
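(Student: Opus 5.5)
We argue by contradiction combined with a compactness argument on vertical strips. The key structural fact is that $r_{n+1}=\det\Lap q=\det\Delta_0+\Lap N_T$. By Assumption~\ref{assum:pp_stable}, $|\det\Delta_0(s)|>\infpp$ on $\C_{\nuassu}$. Also $\Lap N_T(s)\to0$ as $|s|\to\infty$ in any closed half-plane $\{\Re s\ge -\nu_*\}$ with $\nu_*<\nuassu$: $N_T$ is a compactly supported $L^2$ (hence $L^1$) function, so Riemann--Lebesgue applies uniformly in $\Re s$ on bounded strips. For large $\Re s$ the decay follows from dominated convergence, since $e^{-st}N_T(t)$ is controlled by $e^{\nu_* t}|N_T(t)|$.

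The first step is to handle the region far from the origin. Choose $R>0$ such that $|\Lap N_T(s)|\le \infpp/2$ for all $s$ with $\Re s\ge-\nu_*$ and $|s|\ge R$. There we obtain $\sum_j|r_j(s)|\ge|r_{n+1}(s)|\ge \infpp/2$.

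The second step is the compact region $K:=\{\Re s\ge -\nu_*,\ |s|\le R\}$. This set is compact and contained in $\C_{\nuassu}$ because $\nu_*<\nuassu$. Each $r_j$ is continuous there; indeed it is entire, being the Laplace transform of a compactly supported finite measure. By \eqref{eq:rj_rang_1}, derived from Assumption~\ref{assum:spectral_stabilisabilité}, the vector $(r_1(s),\ldots,r_{n+1}(s))$ is nonzero at every point of $\C_{\nuassu}$, so $s\mapsto\sum_j|r_j(s)|$ is continuous and positive on $K$. It therefore attains a positive minimum $d_K$. Hence $d\ge\min(d_K,\infpp/2)>0$, where the infimum over the open set $\C_{\nu_*}$ is bounded below by the infimum over its closure.

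The main obstacle is the uniform decay $\Lap N_T(s)\to0$ on the closed half-plane. I would handle it by writing $\Lap N_T(s)=\int_0^T e^{-st}N_T(t)\,dt$. For $\Re s\ge X$ large, the bound $|\Lap N_T(s)|\le\int_0^T e^{-(\Re s) t}|N_T(t)|dt$ tends to $0$ by dominated convergence. On the strip $-\nu_*\le\Re s\le X$, I would approximate $N_T$ in $L^1$ by step (or $C^1$) functions $\phi$, with error weighted by $e^{\nu_* T}$; for such $\phi$ an explicit integration by parts gives decay in $|\Im s|$ uniformly in $\Re s$ over the strip. One can alternatively avoid this by using only $|\Lap N_T|\le \|N_T\|_{H^2}$-type bounds from Proposition~\ref{prop:h2_pointwise_bound}, but those do not give smallness, so the Riemann--Lebesgue route is the one I would follow.
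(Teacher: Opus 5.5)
Your proof is correct and follows essentially the paper's argument, written directly instead of by contradiction. The paper takes a sequence $(s_k)$ along which all $r_j(s_k)\to0$. In the bounded case it uses compactness together with the rank condition~\eqref{eq:rj_rang_1}. In the unbounded case it uses $r_{n+1}=\det\Delta_0+\Lap N_T$ and $\Lap N_T(s_k)\to0$ (Riemann--Lebesgue or dominated convergence) to contradict Assumption~\ref{assum:pp_stable}. This is exactly your near/far split, and your explicit proof that $\Lap N_T\to0$ uniformly in $\Re s$ makes precise what the paper handles only implicitly through subsequence extraction.
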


\begin{proof}
Assume by contradiction that there exist
$0<\nu_*<\nuassu$ and a sequence
$(s_k)_{k\in\N}\subset \C_{\nu_*}$ such that
$ |r_j(s_k)| \to 0,~\forall j\in\{1,\dots,n+1\}.$
\modif{We first show that $|s_k|\to\infty$. Indeed, if $(s_k)$ were bounded,
then, up to extracting a subsequence, $s_k\to s_\infty$ with
\(
    \Re s_\infty \geq -\nu_*.
\)
Since $\nu_*<\nuassu$, we have
$s_\infty\in\C_{\nuassu}$. By continuity of the functions $r_j$, we would
obtain a common zero
which contradicts~\eqref{eq:rj_rang_1}.} Hence $|s_k|\to\infty$.
From the proof of Proposition~\ref{prop:calcul_det_A}, we have $$ r_{n+1}(s)=\det \Delta_0(s)+\Ntarget(s). $$ Up to extraction, either \(|\Im s_k|\to\infty\) or \(\Re s_k\to\infty\), and thus $ \Ntarget(s_k)\to 0$ by the Riemann-Lebesgue lemma or dominated convergence. Hence, $ \det \Delta_0(s_k)\to 0,$ which contradicts Assumption~\ref{assum:pp_stable}.
\end{proof}

The following theorem is the time domain analog of Theorem~\ref{thm:toeplitz_corona} \modif{(see Appendix~\ref{appendix:Corona})}.
\begin{theorem} \label{thm:temporel_corona}
Under Assumptions~\ref{assum:pp_stable} and~\ref{assum:spectral_stabilisabilité}, let ${\nu_*}$ and $d$ as in Lemma~\ref{lem:hyp_corona}. Then, the operator $T_{{\nu_*}}$ (defined in~\eqref{def_Tnu}) is surjective.
In particular, for all functions $N_*\in L^2_{{\nu_*}}((0,\infty),\R)$, there exists a unique $v ^* \in \mathscr L_{{\nu_*}}$ real valued, such that
\begin{equation}
    \label{inf_temporal}
    \|v^*\|_{\mathscr L_{{\nu_*}}} = \inf_{v \in T_{\nu_*}^{-1}(\{N_*\})}  \|v\|_{\mathscr L_{{\nu_*}}},
\end{equation}
and there exists $C(n,d)>0$ such that
\begin{equation}
    \label{eq:bound_norm_minimiseur_temporal}
    \|v^*\|_{\mathscr L_{{\nu_*}}}\leq C(n,d) \| N_*\|_{L^2_{{\nu_*}}}.
\end{equation}
Moreover, for all $0<\nu <{\nu_*}$ and all $\varepsilon>0$, setting for all $j\in \{1,\ldots,n\}$ $g_j := v_j^*\mathbbb{1}_{[0,S_j]}, f:=v_{n+1}^*\mathbbb{1}_{[0,S_{n+1}]}$ with,
\small
\begin{equation}
    \label{eq:supports_Si}
    \min_{j \in \{1,\ldots,n+1\}} S_j\geq \frac{1}{{\nu_*} -\nu}\ln\Big(\frac{1}{\varepsilon}\|R^\nu\|_{TV}C(n,d)\| N_*\|_{L_{{\nu_*}}^2}\modif{+1}\Big),
\end{equation}
\normalsize
we have,
$\|T_\nu(g, f) -N_*\|_{L^2_{\nu}} \leq \varepsilon.$

\end{theorem}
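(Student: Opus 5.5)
We transfer everything to the frequency domain through the Paley--Wiener isometry (Proposition~\ref{prop:paley}): $\mathcal L$ maps $L^2_{\nu_*}((0,\infty),\C)$ isometrically (up to the factor $\sqrt{2\pi}$) onto $H^2(\C_{\nu_*})$. Under this map $T_{\nu_*}$ becomes the row multiplication operator $\mathcal M:(V_1,\dots,V_{n+1})\mapsto\sum_j(-1)^{n+1+j}r_jV_j$ from $H^2(\C_{\nu_*})^{n+1}$ to $H^2(\C_{\nu_*})$. Its symbols $\pm r_j$ lie in $H^\infty(\C_{\nu_*})$ by Lemma~\ref{lem:q_p_Hinfty_infinite}. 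Lemma~\ref{lem:hyp_corona} gives the Corona condition $\sum_j|r_j|\ge d>0$ on $\C_{\nu_*}$, which is the hypothesis of Theorem~\ref{thm:toeplitz_corona}. That theorem then provides two conclusions, with a constant depending only on $(n,d)$ and $\sup_j\|r_j\|_\infty$, which we absorb into $C(n,d)$:
\begin{itemize}
\item[(i)] $\mathcal M$ is onto, or equivalently there exist $H^\infty$ corona solutions $\phi_j$ with $\sum_j(-1)^{n+1+j}r_j\phi_j=1$, so that $V_j=\phi_j\widehat{N_*}$ is a preimage;
\item[(ii)] the preimage satisfies $\|V\|\le C\|\widehat{N_*}\|_{H^2}$.
\end{itemize}
Pulling back gives surjectivity of $T_{\nu_*}$ together with a preimage satisfying~\eqref{eq:bound_norm_minimiseur_temporal}.

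Existence and uniqueness of the minimizer in~\eqref{inf_temporal} follow from Hilbert space theory. The fiber $T_{\nu_*}^{-1}(\{N_*\})$ is a nonempty, closed (since $T_{\nu_*}$ is bounded), affine subspace of $\mathscr L_{\nu_*}$, so it has a unique element of minimal norm, namely its orthogonal projection of $0$, which lies in $(\ker T_{\nu_*})^\perp$. The minimal norm is at most the norm of the corona preimage, which yields~\eqref{eq:bound_norm_minimiseur_temporal}. For real-valuedness, the measures $R_j$ are real, so complex conjugation commutes with $T_{\nu_*}$. Hence $\overline{v^*}$ lies in the same fiber with the same norm, and uniqueness forces $v^*=\overline{v^*}$.

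For the truncation estimate, fix $0<\nu<\nu_*$ and write $w:=(g,f)-v^*$. Then $T_\nu(g,f)-N_*=T_\nu(v^*)-N_*-T_\nu(w)$. We first check that $T_\nu(v^*)=N_*$ as functions: the convolution identity holds pointwise, and $v^*\in L^2_{\nu_*}\subset L^2_\nu$. It follows that $\|T_\nu(g,f)-N_*\|_{L^2_\nu}\le\|R^\nu\|_{TV}\|w\|_{\mathscr L_\nu}$ by the weighted Young bound above. Since $w_j$ is supported in $[S_j,\infty)$,
\[\|w_j\|_{L^2_\nu}^2=\int_{S_j}^\infty e^{2\nu t}|v_j^*|^2\,\dt\le e^{-2(\nu_*-\nu)S_j}\|v_j^*\|_{L^2_{\nu_*}}^2.\]
Summing over $j$ gives $\|w\|_{\mathscr L_\nu}\le e^{-(\nu_*-\nu)\min_jS_j}\,C(n,d)\|N_*\|_{L^2_{\nu_*}}$. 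Condition~\eqref{eq:supports_Si} makes this exponential factor at most $\varepsilon/(\|R^\nu\|_{TV}C\|N_*\|+\varepsilon)$, which gives the bound $\le\varepsilon$; the ``$+1$'' inside the logarithm only ensures $S_j\ge0$.

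The main obstacle is the first step: checking that Theorem~\ref{thm:toeplitz_corona} applies on the half-plane $\C_{\nu_*}$ rather than on the disk, and that its constant depends only on $(n,d)$ and the $H^\infty$ bounds of the $r_j$. I would handle this by composing with the conformal map from $\C_{\nu_*}$ onto $\D$ and tracking the $H^2$ weights under that change of variables.
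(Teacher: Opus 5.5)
Your proposal is correct and follows the paper's route. You transport $T_{\nu_*}$ to $H^2(\D)$ via Paley--Wiener and the isometry $U_{\nu_*}$, which turns multiplication by $r_j$ into multiplication by $r_j\circ\phi_{\nu_*}^{-1}$, then invoke the Corona theorem using Lemma~\ref{lem:hyp_corona}, and finish with the same weighted-tail truncation estimate. The remaining differences are cosmetic: you get the minimal-norm preimage by projecting directly in $\mathscr L_{\nu_*}$ instead of pulling back the disk minimizer, and you prove real-valuedness with $\overline{v^*}$ (norm-preserving) where the paper uses $\Re v^*$ (norm non-increasing); there is also a harmless sign slip, since $T_\nu(g,f)-N_*=T_\nu(w)$, not $-T_\nu(w)$.
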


\begin{proof}
We reformulate the problem in $H^2(\D)$ using the tools introduced in Appendix~\ref{appendix:hardy}.
For all $j\in \{1,\ldots,n+1\}$, set 
$
\widetilde r_j(s):=(-1)^{n+1+j}r_j(\phi_{\nu_*}^{-1}(s)).$
Then $\widetilde r_j\in H^\infty(\mathbb D)$ because $r_j\in H^\infty(\mathbb C_{\nu_*})$.
Moreover, 
\begin{equation} \label{eq: egalite_iso}\inf_{s\in \D} \sum_{j=1}^{n+1} |\widetilde r_j(s)| = \inf_{s\in \C_{\nu_*}} \sum_{j=1}^{n+1} |r_j(s)|=d>0~ \textit{by Lemma~\ref{lem:hyp_corona}.}\end{equation}
We have
$$
U_{{\nu_*}}\big(\sum_{j=1}^{n+1}(-1)^{n+1+j} r_jH^2(\mathbb C_{{\nu_*}})\big)
=
 \sum_{j=1}^{n+1} \widetilde r_j\,H^2(\mathbb D),$$
 and, 
$$
\mathcal L\big(\operatorname{Ran}(T_{{\nu_*}})\big)=\sum_{j=1}^{n+1} (-1)^{n+1+j}r_j(s)H^2(\mathbb C_{{\nu_*}}).
$$
Hence by Proposition~\ref{prop:paley} and because $U_{{\nu_*}}$ is an isometric isomorphism,
\begin{align}
&\operatorname{Ran}(T_{{\nu_*}})=L^2_{{\nu_*}}((0,\infty),\C)\nonumber
\\&\iff \label{assertion_vraie}
\sum_{j=1}^{n+1} \widetilde r_j(s)H^2(\D)=H^2(\D).
\end{align}
Using~\eqref{eq: egalite_iso}, equation~\eqref{assertion_vraie} is true as a consequence of Theorem~\ref{thm:toeplitz_corona} \modif{(see Appendix~\ref{appendix:Corona})}. 
\modif{Let $N_*\in L^2_{{\nu_*}}((0,\infty),\R)$ and set
\(
    \widetilde N_* := U_{{\nu_*}}(\mathcal L N_*) \in H^2(\D).
\)
By Theorem~\ref{thm:toeplitz_corona}, there exists a unique minimizer
$h^*\in\mathcal H$ satisfying \eqref{eq:minimiseur} and
\eqref{eq:bound_norm_minimiseur}. Define
\(
    v^* := \mathcal L^{-1}U_{{\nu_*}}^{-1}(h^*)
\)
component-wise. Since $U_{{\nu_*}}$ and
$\mathcal L:L^2_{{\nu_*}}((0,\infty),\C)\to H^2(\C_{{\nu_*}})$ are isometric
isomorphisms, we obtain \eqref{inf_temporal} and
\eqref{eq:bound_norm_minimiseur_temporal}.}
\modif{Furthermore, $v^*$ is real-valued. Indeed, since $T_{{\nu_*}}$ has real
coefficients and $N_*$ is real-valued, we have
\(
    T_{{\nu_*}}(\Re v^*)=\Re(T_{{\nu_*}} v^*)=N_*.
\)
Moreover,
\(
    \|\Re v^*\|_{\mathscr L_{{\nu_*}}}\leq \|v^*\|_{\mathscr L_{{\nu_*}}}.
\)
By the uniqueness of the minimizer in \eqref{inf_temporal}, it follows that
\(
    v^*=\Re v^*.
\)
}
Now, \ab{let $0<\nu< {\nu_*}$} and $\varepsilon>0$, let $(S_j)\in \R^{n+1}$ be such that~\eqref{eq:supports_Si} holds.
Notice that, $\forall v\in L_{\nu_*}^2((0,\infty),\C) ~\forall S\geq 0$,
\ab{\begin{equation}\label{eq:weighted_tail}
\|v\|_{L^2_\nu(S,\infty)}\le e^{-({\nu_*}-\nu) S}\|v\|_{L^2_{{\nu_*}}(0,\infty)}.
\end{equation}}
Then, using the boundedness of $T_\nu$,
\begin{align*}
   & \|T_\nu(g,f)-N_*\|_{L^2_\nu} = \|T_\nu(g,f)-T_\nu(v^*)\|_{L^2_\nu}\\
    &\leq \|R^\nu\|_{TV} \|(g,f)-v^*\|_{\Lprod}\\
    &= 
\|R^\nu\|_{TV}
\big(
    \sum_{j=1}^{n+1}
    \|v_j^*\|_{L^2_\nu(S_j,\infty)}^2
\big)^{1/2}\\
    &\leq \max_{1\leq j\leq n+1}e^{-({\nu_*}-\nu)S_j} \|R^\nu\|_{TV}\big(
    \sum_{j=1}^{n+1}
    \|v_j^*\|_{L^2_{{\nu_*}}}^2
\big)^{1/2} \textit{by~\eqref{eq:weighted_tail}}\\
    &\leq\max_{1\leq j\leq n+1}e^{-({\nu_*}-\nu)S_j}\|R^\nu\|_{TV}C(n,d)\| N_*\|_{L_{{\nu_*}}^2}\textit{by~\eqref{eq:bound_norm_minimiseur_temporal}}\\
    &\leq\varepsilon \quad \textit{by~\eqref{eq:supports_Si}}.
\end{align*}
\end{proof}
\subsection{Finding the gains}
The (real-valued) gains $(g,f)$ are given by Theorem~\ref{thm:temporel_corona} with \modif{$N_* = N_T$} and \ab{$\varepsilon=\infpp\sqrt{2\mu}$} (see Assumption~\ref{assum:pp_stable}), \ab{where $0<\mu<\nu<\nu_*$. They satisfy~\eqref{eq:corona_final}.}
Hence, we found a solution to the Corona problem~\eqref{eq:notre_corona_pb}.
Therefore, the characteristic equation~\eqref{eq:characteristic} associated to the closed-loop system~\eqref{main IDE}-\eqref{eq:U_form}, has no solutions on $\C_{\nu-\mu}$. The conclusion follows from Lemma~\ref{stable_lemma}.
\section{Numerical simulations}
In this section, we illustrate the proposed methodology by means of numerical simulations. The code required to reproduce these simulations is available in~\cite{braun2026stabilization_code}.
\modif{We consider system~\eqref{main IDE} with
$$\tau_1=1,\tau_2=\tau_\ast=2,$$
\[
A_1=
\begin{pmatrix}
0.20 & 0.05\\
0.10 & 0.15
\end{pmatrix},
A_2=
\begin{pmatrix}
0.10 & 0.02\\
0.03 & 0.08
\end{pmatrix},
\]
\[
B_0=
\begin{pmatrix}
1\\
0.5
\end{pmatrix},
B_1=
\begin{pmatrix}
0.2\\
1.2
\end{pmatrix},~
\theta_0=0.8,~\theta_1=\theta_\ast=\frac{\pi}{2},
\]
\[
N(\eta)=
\begin{pmatrix}
\sin(\eta) & 0.04\cos(\eta)\\
0.06\sin(2\eta) & \sin(\eta)
\end{pmatrix},
M(\eta)=
\begin{pmatrix}
\sqrt{2\eta}\\
0.5\sqrt{\eta}
\end{pmatrix}.
\]
The initial condition is defined for all
$t\in [-2,0],$
by
\[
x_0(t)=
\begin{pmatrix}
0.2\cos(2t)e^{t/3}\\
-0.15\sin(1.5t)e^{t/4}
\end{pmatrix}.
\]}
This \modif{academic example} falls outside the frameworks considered in~\cite{auriol2024stabilizationintegraldelayequations}-\cite{BPAuriol_fredholm}.
We solve the time-domain formulation of~\eqref{eq:notre_corona_pb}
by discretizing the measures
\(
R_j = \mathcal{L}^{-1} (r_j)
\)
and applying a least-squares procedure. The resulting gains are then truncated in order to obtain compact support through an additional minimization step (see Figure~\ref{fig:gains}). This approach proves to be both simple and robust. 
Figure~\ref{fig:dynamic} shows the evolution of the principal part (without the distributed state delay) \modif{in open-loop with \(U \equiv 0\)}, the open-loop system~\eqref{main IDE} with \(U \equiv 0\), and the closed-loop system~\eqref{main IDE}-\eqref{eq:U_form}.
\begin{figure}[h!]
    \centering
    \subfloat[The controller gains\label{fig:gains}]{
        \includegraphics[width=0.9\linewidth]{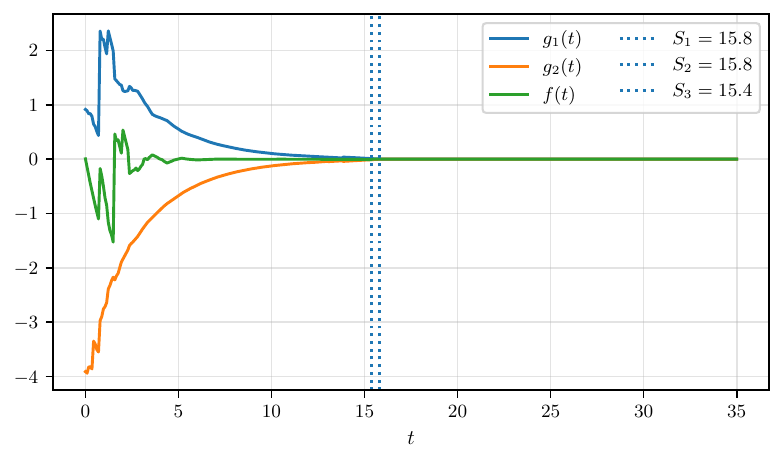}
    }

    \subfloat[Dynamic behavior\label{fig:dynamic}]{
        \includegraphics[width=0.9\linewidth]{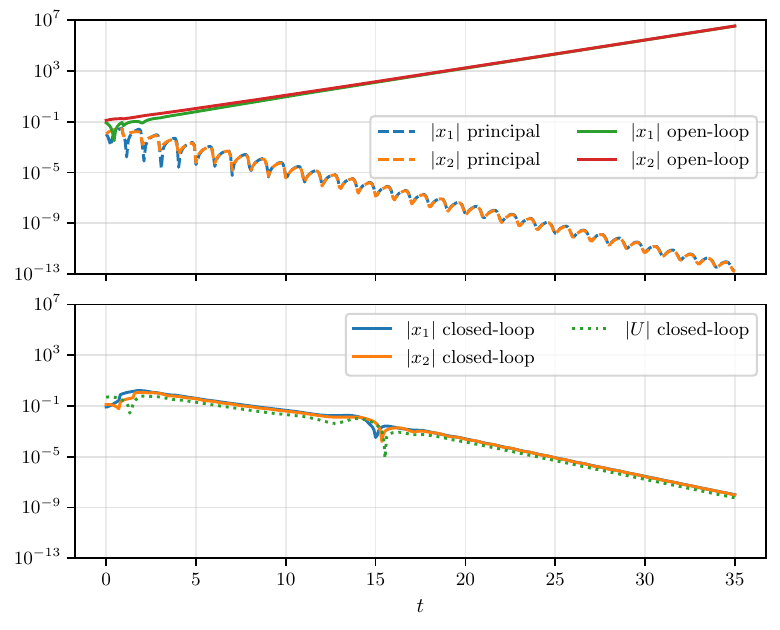}
    }
    \caption{Numerical simulation of system~\eqref{main IDE} with two pointwise delays for $x$ and $U$.}
    \label{fig:all}
\end{figure}
\section{Conclusion}
We have proposed a stabilization method for a general class of vector-valued states, scalar-input IDEs. The control law is expressed as an integral operator acting on the state and input histories, with kernels obtained as solutions to a convolution equation arising from a Corona problem.
Future work will address the vector-valued multi-input setting, which is expected to enable the stabilization of general networks of hyperbolic PDEs~\cite{auriol_hdr}.
\section*{Acknowledgement}
The authors thank Sébastien Fueyo for insightful discussions on the Corona problem and the controllability of IDEs.
\appendix

\section{About the Corona problem}
\subsection{Some reminders about Hardy spaces}
\label{appendix:hardy}
Let $\nu >0$.
We begin by constructing an isometric isomorphism between \(H^2(\Cnu)\) and \(H^2(\mathbb D)\), which will be useful for reformulating the Corona problem~\eqref{eq:notre_corona_pb} on the unit disk \(\mathbb D\).
Let $\phi_\nu:\Cnu\to\mathbb D$ be the Cayley transform for the shifted half-plane:
\begin{equation} \label{eq:Cailey}
\phi_\nu(s)=\frac{(s+\nu)-1}{(s+\nu)+1},\qquad 
\phi_\nu^{-1}(s)=-\nu+\frac{1+s}{1-s}.
\end{equation}
This transform is in particular, a Möbius transform (see e.g.~\cite[chapter III, §3]{Conway1978} and~\cite[page 189, Section 6.3]{GreeneKrantz2006}).
The mapping $\phi_\nu^{-1}$ is a biholomorphism between $\mathbb D$ and $\Cnu$. 
 Let $U_\nu:H^2(\Cnu)\to H^2(\mathbb D)$ be the standard isometric isomorphism associated with $\phi_\nu$ (see e.g.~\cite[Theorem 1.2.5, Chapter 1]{Partington_2004}), defined for $f \in H^2(\Cnu)$ by
$$
(U_\nu f)(s)= \frac{\sqrt{2}}{1-s}\,f(\phi_\nu^{-1}(s)),~\forall s\in \D.$$
It satisfies, for all $\psi\in H^\infty(\Cnu)$, all
$f\in H^2(\Cnu)$, and all $s\in \D$,
$$
U_\nu(\psi f)(s)=(\psi\circ \phi_\nu^{-1})(s)\,(U_\nu f)(s).$$
\begin{prop}[Paley-Wiener (shifted half-plane)] \label{prop:paley} Let $\nu>0$.
The Laplace transform is an isometric isomorphism between $L^2_\nu((0,\infty), \C)$ and $
H^2(\Cnu).$
\end{prop}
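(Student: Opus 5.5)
The plan is to prove the two directions separately. For the forward direction, I will show that the Laplace transform maps $L^2_\nu((0,\infty),\C)$ isometrically into $H^2(\Cnu)$. For surjectivity, I will reduce to the unshifted case $\nu=0$ and use the classical Paley--Wiener theorem for $H^2$ of the right half-plane.

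\emph{Reduction to the unshifted case.} Define the multiplication operator $(E_\nu f)(t):=e^{\nu t}f(t)$. It is an isometric isomorphism from $L^2_\nu((0,\infty),\C)$ onto $L^2((0,\infty),\C)$, since $\|E_\nu f\|_{L^2}^2=\int_0^\infty e^{2\nu t}|f(t)|^2\dt=\|f\|_{L^2_\nu}^2$. Next define the translation $(\Sigma_\nu F)(s):=F(s-\nu)$. It maps holomorphic functions on $\C_0=\{\Re s>0\}$ to holomorphic functions on $\Cnu$. Because the supremum over $x>-\nu$ of $\int_\R|F(x+\nu+i\omega)|^2d\omega$ is exactly the supremum over $x'>0$, the map $\Sigma_\nu$ is an isometric isomorphism $H^2(\C_0)\to H^2(\Cnu)$.

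For $f\in L^2_\nu$ and $\Re s>-\nu$, we have $\fhat(s)=\int_0^\infty e^{-(s+\nu)t}e^{\nu t}f(t)\dt=\widehat{E_\nu f}(s+\nu)$. This integral converges absolutely by Cauchy--Schwarz, because $e^{-(\Re s+\nu)t}\in L^2(0,\infty)$. It follows that $\mathcal L=\Sigma_\nu\circ\mathcal L_0\circ E_\nu$, where $\mathcal L_0$ denotes the Laplace transform on $L^2(0,\infty)$. So it suffices to show that $\mathcal L_0:L^2((0,\infty),\C)\to H^2(\C_0)$ is an isometric isomorphism.

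\emph{The unshifted case.} Holomorphy of $\mathcal L_0 g$ on $\C_0$ follows from differentiating under the integral, which is justified by domination on compact sets. For $x>0$, Plancherel's theorem applied to $t\mapsto e^{-xt}g(t)\mathbbb{1}_{(0,\infty)}(t)$ gives
$$\frac{1}{2\pi}\int_\R|\widehat g(x+i\omega)|^2d\omega=\int_0^\infty e^{-2xt}|g(t)|^2\dt.$$
By monotone convergence, the right-hand side increases to $\|g\|_{L^2}^2$ as $x\downarrow0$, so $\|\mathcal L_0 g\|_{H^2(\C_0)}=\|g\|_{L^2}$. An isometry has closed range and is injective. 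Hence it only remains to show that the range is dense, or better, all of $H^2(\C_0)$.

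\emph{Surjectivity, the main obstacle.} Take $F\in H^2(\C_0)$. The functions $F_x(\omega):=F(x+i\omega)$ are bounded in $L^2(\R)$. I would first establish that $F_x\to F_0$ in $L^2(\R)$ for some boundary function $F_0$, using Cauchy's integral formula on rectangles with vanishing horizontal sides; this is the standard argument of \cite[Theorem 1.2.5]{Partington_2004}. Then I set $g:=\mathcal F^{-1}F_0$. Applying the Cauchy theorem to $F(s)e^{st}$ for $t<0$, and shifting the vertical line to the right where the contribution vanishes, shows that $g$ vanishes on $(-\infty,0)$. Finally, $F=\mathcal L_0 g$ by the Poisson/Cauchy representation of $F$ from its boundary values.

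Alternatively, one could simply cite the classical Paley--Wiener theorem for $H^2$ of the right half-plane, e.g.~\cite{Partington_2004}, and the proof then reduces to the bookkeeping in the first step. I would present the proof that way, keeping the isometry computation explicit.
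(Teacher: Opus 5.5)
Your proof is correct and takes the same route as the paper, which sets $g(t)=e^{\nu t}f(t)$ and applies the classical Paley--Wiener theorem to $g\in L^2((0,\infty),\C)$; you also write out the translation isometry $H^2(\C_0)\to H^2(\Cnu)$ and the Plancherel isometry computation, which the paper leaves implicit. One small slip: the translation should be $(\Sigma_\nu F)(s):=F(s+\nu)$, not $F(s-\nu)$, since that is what your norm computation and the identity $\fhat(s)=\widehat{E_\nu f}(s+\nu)$ actually use.
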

\begin{proof}
Let \(f\in L^2_\nu((0,\infty),\C)\) and set \(g(t):=e^{\nu t}f(t)\), then one can apply the Paley-Wiener Theorem~\cite[Definition 1.2.4 and discussion, Chapter 1]{Partington_2004} to \ab{$g \in L^2((0, \infty), \C)$}.
\end{proof}
\ab{The proof of the following result can be adapted from~\cite[Chapter VI]{Duren2000Hp}.
\begin{prop} \label{prop:h2_pointwise_bound}
    For all $f\in H^2(\Cnu)$, all $0<\mu <\nu$, and all $s\in \C_{\nu -\mu},$
\begin{equation}\label{eq:f_point_bound}|f(s)|
\le
\frac{1}{\sqrt{2\mu}}\,\|f\|_{H^2(\mathbb{C}_\nu)}.
\end{equation}
\end{prop}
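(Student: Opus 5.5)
The plan is to reduce to the unshifted half-plane and use the Cauchy--Schwarz inequality on the Laplace-transform representation given by Proposition~\ref{prop:paley}. Let $f\in H^2(\Cnu)$. By Proposition~\ref{prop:paley} there is a unique $h\in L^2_\nu((0,\infty),\C)$ with $f=\mathcal L h$ and $\|f\|_{H^2(\Cnu)}=\|h\|_{L^2_\nu}$. We take this isometry literally, with the normalization of the $H^2$ norm fixed in the notation section, i.e.\ with the factor $\frac{1}{2\pi}$. Under that normalization Plancherel gives $\|f\|_{H^2(\Cnu)}^2=\int_0^\infty e^{2\nu t}|h(t)|^2\dt$.

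For $s\in\C_{\nu-\mu}$ we have $\Re s>\mu-\nu$. We write
\[
f(s)=\int_0^\infty e^{-st}h(t)\dt=\int_0^\infty e^{-(s+\nu)t}\,e^{\nu t}h(t)\dt,
\]
where the integral converges absolutely by the estimate below. Cauchy--Schwarz then gives
\[
|f(s)|\le\Big(\int_0^\infty e^{-2\Re(s+\nu)t}\dt\Big)^{1/2}\|h\|_{L^2_\nu}=\frac{1}{\sqrt{2\Re(s+\nu)}}\|f\|_{H^2(\Cnu)}.
\]
Since $\Re(s+\nu)>\mu$, this yields $|f(s)|\le\frac{1}{\sqrt{2\mu}}\|f\|_{H^2(\Cnu)}$, which is the claimed bound.

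The one point that needs care is that the pointwise value of $f\in H^2(\Cnu)$ on $\Cnu$ really coincides with the Laplace integral of $h$. This is part of the Paley--Wiener statement: the isomorphism is realized by the Laplace transform, which is holomorphic on $\Cnu$. So we only need to check that the constant in the $H^2$ norm matches the one in Plancherel. If the normalization differed, say with no $\frac{1}{2\pi}$ factor, the bound would change by a factor of $\sqrt{2\pi}$.

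An alternative route is the one the authors suggest, following~\cite[Chapter VI]{Duren2000Hp}: represent $f(s)$ via the Cauchy/Poisson integral on the line $\Re z=-\nu$ and apply Cauchy--Schwarz against the Cauchy kernel. This gives the same bound, since $\frac{1}{2\pi}\int_\R\frac{d\omega}{|s+\nu-i\omega|^2}=\frac{1}{2\Re(s+\nu)}$. The time-domain route is shorter given Proposition~\ref{prop:paley}, so I would use it.
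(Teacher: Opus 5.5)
Your proof is correct and is essentially the paper's argument. The paper applies Cauchy--Schwarz in $H^2(\Cnu)$ against the reproducing kernel $k_s(z)=\frac{1}{z+\overline{s}+2\nu}$, which is the Laplace transform of $t\mapsto e^{-(\overline{s}+2\nu)t}$; your time-domain Cauchy--Schwarz is exactly the pairing of $h$ with this function in the weighted inner product of $L^2_\nu$, so both routes give $|f(s)|\le (2(\Re s+\nu))^{-1/2}\|f\|_{H^2(\Cnu)}$, and your check that the $\frac{1}{2\pi}$ normalization matches Plancherel is right.
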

\begin{proof}
The Hardy space \(H^2(\C_\nu)\) is a reproducing kernel Hilbert space (see~\cite{Duren2000Hp}), with
kernels
$$
k_s(\cdot )=\frac{1}{\cdot+\overline{s}+2\nu}.
$$
Meaning, for all \(f\in H^2(\C_\nu)\), and all $s\in \C_\nu$
\(
f(s)=\langle f,k_s\rangle_{H^2(\C_\nu)}.
\)
By Cauchy-Schwarz inequality and because
$$
\|k_s\|_{H^2(C_\nu)}^2=k_s(s)
=\frac{1}{2(\operatorname{Re}s+\nu)},
$$
we obtain
\[
|f(s)|
\leq
\frac{1}{\sqrt{2(\operatorname{Re}s+\nu)}}\,
\|f\|_{H^2(C_\nu)}.
\]
Consequently if \(s\in C_{\nu-\mu}\), we obtain~\eqref{eq:f_point_bound}.
\end{proof}}
The space $H^2(\D)$ is a closed subspace of the Hilbert space $L^2(\T)$~\cite[Theorem 1.2.2, Chapter 1]{Partington_2004}. 
The scalar product on $H^2(\D)$ is the natural one induced by $L^2(\T)$, namely, for $f$ and $g$ in $ H^2(\D)$, 
$$\langle f, g\rangle_{H^2}:= \langle f, g\rangle_{L^2(\mathbb{T})}
=  \frac{1}{2\pi} \int_0^{2\pi} f(e^{i\theta}) \overline{g(e^{i\theta})}\, d\theta.$$
 In the following, we set
$
\mathcal{H} :=(H^2(\mathbb{D}))^{n+1},
$
which we view as a subspace of \((L^2(\mathbb{T}))^{n+1}\). Since \(H^2(\mathbb{D})\) is a closed subspace of \(L^2(\mathbb{T})\), it follows that \(\mathcal{H}\) is a closed subspace of \(\prod_{j=1}^{n+1} L^2(\mathbb{T})\), and hence a Hilbert space.
The scalar product on \(\mathcal{H}\) is the natural one induced by \(L^2(\mathbb{T})\), namely, for \(f = (f_1,\dots,f_{n+1})\) and \(g = (g_1,\dots,g_{n+1})\) in \(\mathcal{H}\),
$
\langle f, g \rangle_{\mathcal{H}} 
:= \sum_{j=1}^{n+1} \langle f_j, g_j \rangle_{L^2(\mathbb{T})}.$
\subsection{Carleson's Corona theorem}

\label{appendix:Corona}
\ab{In this subsection, we state a particular case of the well-known Carleson Corona theorem on the unit disk $\D$~\cite{Carleson1962}.
Let $\widetilde r_j\in H^\infty(\D) $ for all $j \in \{1,\ldots,n+1\}$.
Set
\begin{equation}
    \label{def:operator_T} 
    \Top:\mathcal{H} \to H^2(\D), \quad \Top h = \sum_{j=1}^{n+1} \widetilde r_j h_j.
\end{equation}
By Cauchy-Schwarz inequality, \(\Top\) is bounded and satisfies
$$
\|\Top h\|_{H^2}
\le \left(\sum_{j=1}^{n+1}\|\widetilde r_j\|_{H^\infty}^2\right)^{1/2}\,\|h\|_{\mathcal H}, \forall h\in\mathcal H.$$
\begin{theorem}[Corona in $H^2$] \label{thm:toeplitz_corona}
Assume $$d:=\inf_{z\in\D} \sum_{j=1}^{n+1} |\widetilde r_j(z)|>0.$$ Then, for all $\widetilde N_* \in  H^2(\D)$, there exists a unique $h^* \in\mathcal{H}$ such that
  \begin{equation}
      \label{eq:minimiseur}
      \|h^*\|_{\mathcal{H}} = \inf_{h\in \Top^{-1}(\{\widetilde N_*\})} \|h\|_{\mathcal{H}}.
  \end{equation}
Moreover, there exists $C(n,d)>0$ independent of $\widetilde N_*$ such that
\begin{equation}
    \label{eq:bound_norm_minimiseur}
    \|h^*\|_{\mathcal{H}}\leq C(n,d)\|\widetilde N_*\|_{H^2(\D)}.
\end{equation}
\end{theorem}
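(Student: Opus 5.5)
The plan is to split Theorem~\ref{thm:toeplitz_corona} into two independent parts. The first part is surjectivity of $\Top$ with a bounded right inverse, and this is where Carleson's theorem enters. The second part is the existence and uniqueness of a minimal-norm preimage, which is pure Hilbert space geometry. For uniqueness, note that $\Top^{-1}(\{\widetilde N_*\})$ is a closed affine subspace of the Hilbert space $\mathcal H$: it is closed because $\Top$ is bounded, and it is nonempty once surjectivity is known. A nonempty closed convex subset of a Hilbert space has a unique element of minimal norm, namely the projection of $0$. Concretely, $h^* = h_0 - P_{\ker \Top} h_0$ for any preimage $h_0$, so $h^*\in(\ker\Top)^\perp$.

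For surjectivity with a norm bound, I would invoke Carleson's Corona theorem. Since $\sum_j|\widetilde r_j|\ge d>0$ on $\D$, we have $\max_j |\widetilde r_j|\ge d/(n+1)$, which gives the usual corona hypothesis $\sum_j |\widetilde r_j(z)|^2\ge \delta^2 := d^2/(n+1)^2$. I would also normalize so that $\|\widetilde r_j\|_{H^\infty}\le 1$; this only rescales the constants, and one may alternatively let $C$ depend on these norms as well. The Corona theorem (Carleson, with Wolff's proof) then provides $\varphi_1,\dots,\varphi_{n+1}\in H^\infty(\D)$ with $\sum_j \widetilde r_j\varphi_j\equiv 1$ and $\|\varphi_j\|_{H^\infty}\le C_0(n,\delta)$. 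Given $\widetilde N_*\in H^2(\D)$, set $h_0:=(\varphi_1\widetilde N_*,\dots,\varphi_{n+1}\widetilde N_*)$. Each component lies in $H^2(\D)$, because multiplication by an $H^\infty$ function preserves $H^2$ with norm at most the sup norm. By construction $\Top h_0=\widetilde N_*$, and
\[
\|h_0\|_{\mathcal H}\le \Big(\sum_j\|\varphi_j\|_{H^\infty}^2\Big)^{1/2}\|\widetilde N_*\|_{H^2}\le \sqrt{n+1}\,C_0\,\|\widetilde N_*\|_{H^2}.
\]
Since $h^*$ minimizes the norm over all preimages, $\|h^*\|_{\mathcal H}\le\|h_0\|_{\mathcal H}$, and this gives \eqref{eq:bound_norm_minimiseur} with $C(n,d):=\sqrt{n+1}\,C_0$, a constant independent of $\widetilde N_*$.

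The main obstacle is the Corona step itself, and in particular keeping explicit control of the constant. I would not reprove it; I would cite Carleson together with the quantitative form coming from Wolff's $\bar\partial$-proof in Garnett or Nikolski, where the bound depends only on $n$ and $\delta$ once the data are normalized in $H^\infty$. If one wants to avoid the full $H^\infty$ corona theorem, there is a lighter alternative: the Toeplitz corona theorem (Arveson, Rosenblum, Tolokonnikov) states that $\Top\Top^*\ge c^2 I$ on $H^2$ exactly when $H^\infty$ solutions exist. With that criterion, $\Top$ is onto and $h^*=\Top^*(\Top\Top^*)^{-1}\widetilde N_*$, with $\|h^*\|\le c^{-1}\|\widetilde N_*\|$. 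Proving the lower bound on $\Top\Top^*$ from $d>0$ directly still requires Carleson measure estimates, however, so citing the classical theorem is the intended route.
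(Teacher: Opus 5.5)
Your proposal is correct and matches the paper's proof. The paper also takes $H^\infty$ corona solutions $\phi_j$ with $\sum_j \widetilde r_j\phi_j\equiv 1$ (citing Garnett, Theorem VIII.2.1), sets $h_j=\phi_j\widetilde N_*$ to get a preimage with $\|h\|_{\mathcal H}\le C\|\widetilde N_*\|_{H^2(\D)}$, and applies the Hilbert projection theorem to the nonempty closed convex set $\Top^{-1}(\{\widetilde N_*\})$ to get the unique minimizer and the bound. Your remark that the corona constant also depends on the $H^\infty$ norms of the $\widetilde r_j$ is a precision the paper's notation $C(n,d)$ omits; it is harmless because those norms are fixed data independent of $\widetilde N_*$.
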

\begin{proof}
Let $\widetilde N_* \in  H^2(\D)$.
Since $\widetilde r_j \in H^\infty(\D)$ for all $j\in \{1,\ldots, n+1\}$, we can apply~\cite[Theorem~2.1 Section~VIII.2]{Garnett2007} to obtain $\phi_j\in  H^\infty(\D)$ for all $\{j\in 1,\ldots, n+1\}$ such that
\[
\sum_{j=1}^{n+1} \widetilde r_j(s) \phi_j(s) =1, \forall s\in \D,
\]
and such that there exists $C(n,d)>0$ (see also~\cite[Theorem 1.1]{andersson2000estimates} for a sharper bound) such that
\begin{equation}
    \label{eq:bound_phi}
    \sum_{j=1}^{n+1}\|\phi_j\|_{H^\infty(\D)} \leq C(n,d).
\end{equation}
Set $h_j:= \phi_j \widetilde N_*$ for all $j\in \{1,\ldots, n+1\}$. Then, $h=(h_1,\ldots,h_{n+1}) \in \mathcal H$,
\[\mathcal T h =\widetilde N_*,\]
and
\begin{equation*}
    \|h\|_{\mathcal{H}}\leq C(n,d)\|\widetilde N_*\|_{H^2(\D)}.
\end{equation*}
Hence $\Top ^{-1}(\{\widetilde N_*\})\neq \varnothing$, furthermore, by linearity and continuity of $\Top $~\eqref{def:operator_T}, the set $\Top ^{-1}(\{\widetilde N\})\neq \varnothing$ is convex and closed.
By Hilbert's projection theorem, there exists a unique minimizer $h^*$ in the sense of~\eqref{eq:minimiseur}.
Because $\|h^*\|_\mathcal H \leq \|h\|_\mathcal H $, we have~\eqref{eq:bound_norm_minimiseur}.
\end{proof}}
\modif{
\begin{remark}
    Under the slightly stronger assumption that there exists $d(n)>0$ such that
$\mathcal{T}\mathcal{T}^* \geq d(n)\, I$,
where $I : H^2(\mathbb{D}) \to H^2(\mathbb{D})$ is the identity operator, 
the bound in~\eqref{eq:bound_norm_minimiseur} can be sharpened. Specifically, 
by~\cite[Theorem 8.53, Section 8.4]{agler2023pick}, one obtains
\[
    C(n,d) = \frac{1}{\sqrt{d(n)}}.
\]
\end{remark}}
\bibliographystyle{abbrv}
\bibliography{references}
\end{document}